\newtheorem{theorem}{Theorem}
\newtheorem{proposition}{Proposition}
\theoremstyle{remark}
\theoremstyle{definition}
\title[Elliptic problems with unknowns on the boundary]{Elliptic problems\\ with unknowns on the boundary\\and irregular boundary data}
\author[I. Chepurukhina]{Iryna Chepurukhina}
\address{Institute of Mathematics of the National Academy of Sciences of Ukraine, Tereshchen\-kiv\-s'ka 3, Kyiv 01024, Ukraine}
\email{Chepuruhina@gmail.com}
\author[A. Murach]{Aleksandr Murach}
\address{Institute of Mathematics of the National Academy of Sciences of Ukraine, Tereshchen\-kiv\-s'ka 3, Kyiv 01024, Ukraine}
\email{murach@imath.kiev.ua}
\subjclass[2010]{Primary 35J40, 46E35}
\keywords{Elliptic problem, refined Sobolev scale, Fredholm operator,  boundary data, generalized solution, a priori estimate, regularity of solution}
\begin{document}

\maketitle

\begin{abstract}
We consider an elliptic problem with unknowns on the boundary of the domain of the elliptic equation and suppose that the right-hand side of this equation is square integrable and that the boundary data are arbitrary (specifically, irregular) distributions. We investigate local (up to the boundary) properties of generalized solutions to the problem in Hilbert distribution spaces that belong to the refined Sobolev scale. These spaces are parametrized with a real number and a function that varies slowly at infinity. The function parameter refines the number order of the space.
We prove theorems on local regularity and a local \textit{a priori} estimate of generalized solutions to the problem under investigation. These theorems are new for Sobolev spaces as well.
\end{abstract}

\section{Introduction}\label{sec1}

In the theory of elliptic boundary-value problems, of special interest is the case where boundary data are irregular distributions (so called rough data); see monographs \cite{BehrndtHassideSnoo20, Berezansky68, Hermander63, KozlovMazyaRossmann97, LionsMagenes72, MikhailetsMurach14, Roitberg96, Roitberg99} and references therein. The investigation of elliptic problems in this case is more complicated as compared with regular enough boundary data. This is stipulated by the fact that the trace theorems for Sobolev or other classical distribution spaces (see, e.g., \cite[Section~4.7]{Triebel95}) cease to be valid for irregular boundary data. There are some approaches to overcome this serious obstacle. One of them is to include norms of traces in Sobolev norms of solutions to an elliptic problem. This approach was elaborated by Roitberg \cite{Roitberg64, Roitberg65, Roitberg96}; it deals with solutions that are not distributions, generally speaking. Another way is to include a relevant norm of the right-hand side of the elliptic equation in the norms of solutions. This approach is due to Lions and Magenes \cite{LionsMagenes62V, LionsMagenes63VI, LionsMagenes72}; it remains in the framework of distribution spaces but is applicable to narrower classes of data of the elliptic equation as compared with Roitberg's approach. Investigating elliptic problems in a half-space, H\"ormander \cite[Section~10.4]{Hermander63} used anisotropic Sobolev spaces of high enough regularity only along the normal to the boundary. These approaches yield different solvability theorems for elliptic problems whose boundary data are arbitrary distributions.

The number of boundary conditions increases if an elliptic problem contains unknown distributions on the boundary. Such elliptic problems were first considered by Lawruk \cite{Lawruk63a, Lawruk63b}. They form a part of Boutet de Monvel's algebra \cite{BoutetdeMonvel71} and arise in various applications, specifically in hydrodynamics and the theory of elasticity \cite{AslanyanVassilievLidskii81, Ciarlet90}, and are also used in the theory of free boundary problems \cite{NazarovPileckas93}. A solvability theory for elliptic problems with unknowns on the boundary is given in monographs \cite[Part~1]{KozlovMazyaRossmann97} and \cite[Chapter~2]{Roitberg99} in the framework of Roitberg's approach. Kozhevnikov \cite{Kozhevnikov01} extended this approach to pseudodifferential elliptic problems that form the Boutet de Monvel algebra.

In this connection, it is interesting to investigate these problems in the spirit of the approach by Lions and Magenes. Thus, recently we proved a corresponding solvability theorem for elliptic problems with unknowns on the boundary \cite[Theorem~1]{MurachChepurukhina15UMJ5}. We assumed that the right-hand side of the elliptic equation is square integrable and considered boundary data in Hilbert distribution spaces of lower orders (including negative ones). These spaces belong to the refined Sobolev scale \cite[Section~2.1]{MikhailetsMurach14}. The purpose of the present paper is to supplement this result with theorems on local (up to the boundary) regularity and a local \textit{a priori} estimate of generalized solutions to the problem. In contrast to the corresponding global properties of the solutions \cite{MurachChepurukhina15UMJ5}, these theorems do not follow directly from the solvability theorem. Specifically, this is caused by the fact that the space of solutions (to the elliptic equation) used in \cite{MurachChepurukhina15UMJ5} is not closed with respect to the multiplication of distributions by smooth cut-off functions.

\section{Statement of the problem}\label{sec2}

Let $\Omega $ be a bounded Euclidean domain of dimension $n\geq2$ with an infinitely smooth boundary $\Gamma$. We arbitrarily choose integers $q\geq1$, $\varkappa\geq1$, $m_{1},\ldots,m_{q+\varkappa}\leq2q-1$ and $r_{1},\ldots,r_{\varkappa}$. We consider the following boundary-value problem in $\Omega$:
\begin{gather}\label{10f1}
Au=f\quad\mbox{in}\quad\Omega, \\
B_{j}u+\sum_{k=1}^{\varkappa}C_{j,k}v_{k}=g_{j}\quad \mbox{on} \quad \Gamma,\quad j=1, ...,q+\varkappa.\label{10f2}
\end{gather}
Here, $A:=A(x,D)$ is a linear partial differential operator (PDO) on $\overline{\Omega}:=\Omega\cup\Gamma$ of the even order $2q$; each $B_{j}:=B_{j}(x,D)$ is a linear boundary PDO on $\Gamma$ whose order
$\mathrm{ord}\,B_{j}\leq m_{j}$, and each $C_{j,k}:=C_{j,k}(x,D_{\tau})$ is a linear tangent PDO on~$\Gamma$ whose order $\mathrm{ord}\,C_{j,k}\leq m_{j}+r_{k}$. (As usual, PDOs of negative order are defined to be zero operators.) We assume that all coefficients of the indicated PDOs are infinitely smooth functions on $\overline{\Omega}$ or $\Gamma$ respectively. The distribution $u$ on $\Omega$ and the distributions $v_{1},\ldots,v_{\varkappa}$ on $\Gamma$ are unknown in this problem. In the paper, all functions and distributions are supposed to be complex-valued; we therefore use complex distribution/function spaces.

We assume that $m\geq-r_{k}$ for each $k\in\{1,\ldots,\varkappa\}$.
This assumption is natural; indeed, if $m+r_{k}<0$ for some $k$, then $C_{1,k}=\cdots=C_{q+\varkappa,k}=0$, i.e. the boundary conditions \eqref{10f2} will not contain the unknown $v_{k}$.

We suppose that the boundary problem \eqref{10f1}, \eqref{10f2} is elliptic in $\Omega$ as a problem with additional unknown distributions   $v_{1},\ldots,v_{\varkappa}$ on $\Gamma$. This means that the PDO $A$ is properly elliptic on $\overline{\Omega}$, and the system of boundary conditions \eqref{10f2} covers  $A$ on $\Gamma$ (see, e.g., \cite[Section~3.1.3]{KozlovMazyaRossmann97}). We recall the corresponding definitions.

Let $A^{\circ}(x,\xi)$, $B_{j}^{\circ}(x,\xi)$, and $C_{j,k}^{\circ}(x,\tau)$ denote the principal symbols of the PDOs $A(x,D)$, $B_{j}(x,D)$, and $C_{j,k}(x,D_{\tau})$ respectively, the last two PDOs being considered as that of the formal orders $m_{j}$ and $m_{j}+r_{k}$ respectively. Thus, $A^{\circ}(x,\xi)$ and $B_{j}^{\circ}(x,\xi)$ are homogeneous polynomials in $\xi\in\mathbb{C}^{n}$ of order $2q$ and $m_{j}$ respectively. Besides, $C_{j,k}^{\circ}(x,\tau)$ is a homogeneous polynomial of order $m_{j}+r_{k}$ in $\tau$, where $\tau$ is a tangent vector to the boundary $\Gamma$ at the point~$x$. Defining the principal symbols, we consider the principal parts of the PDOs as polynomials in $D_l:=i\partial/\partial x_l$, where $l=1,\ldots,n$, and then replace each differential operator $D_l$ with the $l$-th component $\xi_l$ of the vector $\xi$.

The boundary-value problem \eqref{10f1}, \eqref{10f2} is called elliptic in $\Omega$ if it satisfies the following two conditions:
\begin{itemize}
\item [(i)] The PDO $A(x,D)$ is properly elliptic at every point $x\in\overline{\Omega}$; i.e., for arbitrary linear independent vectors $\xi',\xi''\in\mathbb{R}^{n}$, the polynomial $A^{\circ}(x,\xi'+\zeta\xi'')$ in $\zeta\in\mathbb{C}$ has $q$ roots with positive imaginary part and $q$ roots with negative imaginary part (of course, these roots are calculated with regard for their multiplicity).
\item [(ii)] The boundary conditions \eqref{10f2} cover $A(x,D)$ at every point $x\in\Gamma$. This means that, for an arbitrary tangent vector $\tau\neq0$ to $\Gamma$ at $x$, the boundary-value problem
    \begin{gather*}
    A^{\circ}(x,\tau+\nu(x)D_{t})\theta(t)=0\quad\mbox{for}\;\;t>0,\\
    B_{j}^{\circ}(x,\tau+\nu(x)D_{t})\theta(t)\big|_{t=0}+
    \sum_{k=1}^{\varkappa}C_{j,k}^{\circ}(x,\tau)\lambda_{k}=0,
    \quad j=1,...,q+\varkappa,\\
    \theta(t)\to0\quad\mbox{as}\quad t\rightarrow\infty
    \end{gather*}
    has only the trivial (zero) solution. Here, the function  $\theta\in C^{\infty}([0,\infty))$ and the numbers $\lambda_{1},\ldots,\lambda_{\varkappa}\in\mathbb{C}$ are unknown, whereas $\nu(x)$ is the unit inward normal vector to $\Gamma$ at $x$. Besides, $A^{\circ}(x,\tau+\nu(x)D_{t})$ and $B_{j}^{\circ}(x,\tau+\nu(x)D_{t})$ are differential operators with respect to $D_{t}:=i\partial/\partial t$. We obtain them putting $\zeta:=D_{t}$ in the polynomials $A^{\circ}(x,\tau+\zeta\nu(x))$ and $B_{j}^{\circ}(x,\tau+\zeta\nu(x))$ in $\zeta$, respectively.
\end{itemize}

Some examples of elliptic problems of the form \eqref{10f1}, \eqref{10f2} are given in \cite[Subsection~3.1.5]{KozlovMazyaRossmann97}.

\section{A refined Sobolev scale}\label{sec3}

This scale consists of the Hilbert generalized Sobolev spaces $H^{s,\varphi}$ whose order of regularity is given by a number  $s\in\mathbb{R}$ and function $\varphi\in\mathcal{M}$. Here, $\mathcal{M}$ denotes the set of all Borel measurable functions $\varphi:[1,+\infty)\rightarrow(0,+\infty)$ such that both functions $\varphi$ and $1/\varphi$ are bounded on each compact subset of $[1,+\infty)$ and that $\varphi$ varies slowly at  infinity in the sense of Karamata \cite{Karamata30a}, i.e. $\varphi(\lambda t)/\varphi(t)\rightarrow 1$ as $t\rightarrow\infty$ for every $\lambda>0$.

Slowly varying functions are well studied and have various important applications \cite{BinghamGoldieTeugels89}. A standard example  of such functions is
$$
\varphi(t):=(\log t)^{r_{1}}(\log\log
t)^{r_{2}}\ldots(\underbrace{\log\ldots\log}_{k\;\mbox{\small times}}
t)^{r_{k}}\quad\mbox{of}\quad t\gg1,
$$
where $k\in\mathbb{N}$ and $r_{1},\ldots,r_{k}\in\mathbb{R}$.

Let $s\in\mathbb{R}$ and $\varphi\in\mathcal {M}$. By definition, the linear space $H^{s,\varphi}(\mathbb{R}^{n})$, with $n\geq1$, consists of all distributions $w\in\mathcal{S}'(\mathbb{R}^{n})$ that their Fourier transform $\widehat{w}$ is locally Lebesgue integrable over $\mathbb{R}^{n}$ and satisfies the condition
$$
\|w\|_{s,\varphi;\mathbb{R}^{n}}^{2}:=
\int\limits_{\mathbb{R}^{n}}\langle\xi\rangle^{2s}
\varphi^{2}(\langle\xi\rangle)\,
|\widehat{w}(\xi)|^{2}\,d\xi<\infty.
$$
Here, $\mathcal{S}'(\mathbb{R}^{n})$ is the linear topological space of all tempered distributions on $\mathbb{R}^{n}$, and
$\langle\xi\rangle:=(1+|\xi|^{2})^{1/2}$. By definition, $\|\cdot\|_{s,\varphi;\mathbb{R}^{n}}$ is the norm in $H^{s,\varphi}(\mathbb{R}^{n})$.

The space $H^{s,\varphi}(\mathbb{R}^{n})$ is a special isotropic Hilbert case of the spaces introduced and investigated by H\"ormander \cite[Section~2.2]{Hermander63} (see also his monograph \cite[Section~10.1]{Hermander83}) and by Volevich and Paneah \cite[\S~2]{VolevichPaneah65}. If $\varphi(\cdot)\equiv1$, the space $H^{s,\varphi}(\mathbb {R}^{n})$ becomes the inner product Sobolev space $H^{s}(\mathbb {R}^{n})$ of order $s\in\mathbb{R}$. Generally, we have the dense continuous embeddings
\begin{equation}\label{10f3}
H^{s+\varepsilon}(\mathbb{R}^{n})\hookrightarrow H^{s,\varphi}(\mathbb{R}^{n})\hookrightarrow H^{s-\varepsilon}(\mathbb{R}^{n})
\quad\mbox{whenever}\quad\varepsilon>0.
\end{equation}
They show that the function parameter $\varphi$ refines the main regularity characterized by the number $s$. Therefore, the class of spaces $H^{s,\varphi}(\mathbb{R}^{n})$, where $s\in\mathbb{R}$ and $\varphi\in\mathcal{M}$, was called the refined Sobolev scale over $\mathbb{R}^{n}$ \cite[Section~1.3.3]{MikhailetsMurach14}. This class was selected and investigated in \cite{MikhailetsMurach05UMJ5, MikhailetsMurach06UMJ3} (compare, e.g., with \cite[Chapter~III]{Triebel01} and \cite{HaroskeMoura04}, where similar classes of Banach and more general spaces of distributions were studied).

To investigate the boundary-value problem \eqref{10f1}, \eqref{10f2}, we
need versions of the space $H^{s,\varphi}(\mathbb{R}^{n})$ for $\Omega$ and $\Gamma$; they are considered in \cite[Sections 2.1 and 3.2.1]{MikhailetsMurach14}.

By definition, the linear space $H^{s,\varphi}(\Omega)$ consists of
the restrictions of all distributions $w\in H^{s,\varphi}(\mathbb{R}^{n})$ to $\Omega$. It is endowed with the norm
$$
\|u\|_{s,\varphi;\Omega}:=
\inf\,\bigl\{\,\|w\|_{s,\varphi;\mathbb{R}^{n}}:
w\in H^{s,\varphi}(\mathbb{R}^{n}),\;\,
w=u\;\,\mbox{in}\;\,\Omega\,\bigr\},
$$
where $u\in H^{s,\varphi}(\Omega)$. The space $H^{s,\varphi}(\Omega)$ is Hilbert and separable with respect to this norm, with $C^{\infty}(\overline{\Omega})$ being a dense subset of  this space.

Briefly saying, the space $H^{s,\varphi}(\Gamma)$ consists of all distributions on $\Gamma$ that are reduced to distributions from $H^{s,\varphi}(\mathbb{R}^{n-1})$ in local coordinates on $\Gamma$. Let us give a detailed definition. We arbitrarily choose a finite collection of infinitely smooth local charts $\pi_j:\mathbb{R}^{n-1}\leftrightarrow\Gamma_{j}$, with $j=1,\ldots,\lambda$, that the open sets $\Gamma_{1},\ldots,\Gamma_{\lambda}$ form a covering of $\Gamma$. We also arbitrarily choose functions $\chi_j\in C^{\infty}(\Gamma)$, with $j=1,\ldots,\lambda$, that form a partition of unity on $\Gamma$ subject to $\mathrm{supp}\,\chi_j\subset\Gamma_j$. By definition, the linear space $H^{s,\varphi}(\Gamma)$ consists of all distributions $h\in\mathcal{D}'(\Gamma)$ such that $(\chi_{j}h)\circ\pi_{j}\in H^{s,\varphi}(\mathbb{R}^{n-1})$ for each $j\in\{1,\ldots,\lambda\}$.
Here, $\mathcal{D}'(\Gamma)$ is the linear topological space of all distributions on $\Gamma$, and $(\chi_{j}h)\circ\pi_{j}$ stands for  the representation of the distribution $\chi_{j}h$ in the local chart $\pi_{j}$. The norm in $H^{s,\varphi}(\Gamma)$ is defined by the formula
$$
\|h\|_{s,\varphi;\Gamma}:=
\biggl(\,\sum_{j=1}^{\lambda}\,\|(\chi_{j}h)\circ\pi_{j}\|_
{s,\varphi;\mathbb{R}^{n-1}}^{2}\biggr)^{1/2}.
$$
The space $H^{s,\varphi}(\Gamma)$ is Hilbert and separable. It does not depend (up to equivalence of norms) on the indicated choice of local charts and partition of unity on $\Gamma$ \cite[Theorem~2.3]{MikhailetsMurach14}. The set $C^{\infty}(\Gamma)$ is dense in $H^{s,\varphi}(\Gamma)$.

The spaces $H^{s,\varphi}(\Omega)$ and $H^{s,\varphi}(\Gamma)$, where $s\in\mathbb{R}$ and $\varphi\in\mathcal{M}$, form the refined Sobolev scales over $\Omega$ and $\Gamma$. If $\varphi(\cdot)\equiv1$, these spaces become the inner product Sobolev spaces $H^{s}(\Omega)$ and $H^{s}(\Gamma)$, the norms in them being denoted by $\|\cdot\|_{s;\Omega}$ and $\|\cdot\|_{s;\Gamma}$, resp. Generally, the dense compact embeddings \eqref{10f3} hold true provided that we replace $\mathbb{R}^{n}$ with $\Omega$ or $\Gamma$.

The refined Sobolev scale over $G\in\{\mathbb{R}^{n},\Omega,\Gamma\}$ possesses the following important interpolation property: every space $H^{s,\varphi}(G)$ is obtained by quadratic interpolation (with an appropriate function parameter) between the Sobolev spaces $H^{s-\varepsilon}(G)$ and $H^{s+\delta}(G)$ where $\varepsilon,\delta>0$ (see \cite[Theorems 1.14, 2.2, and 3.2]{MikhailetsMurach14}). This property play a key role in applications of these scales to elliptic operators and elliptic problems (see \cite{MikhailetsMurach12BJMA2, MikhailetsMurach14} and references therein).

In what follows we will consider various Hilbert spaces induced by the spaces $H^{s,\varphi}(G)$ and related to the problem \eqref{10f1}, \eqref{10f2}. If $\varphi(\cdot)\equiv1$, we will omit the index $\varphi$ in the designations of these spaces and norms in them.

\section{Main results}\label{sec4}

Consider the linear mapping
\begin{equation}\label{mapping}
\Lambda:(u,v_{1},...,v_{\varkappa})\mapsto(f,g_{1},...,g_{q+\varkappa}),
\;\;\mbox{where}\;\;u\in C^{\infty}(\overline{\Omega}),\;\;
v_{1},\ldots,v_{\varkappa}\in C^{\infty}(\Gamma)
\end{equation}
and where the functions $f$ and $g_{1}$,..., $g_{q+\varkappa}$ are defined by formulas \eqref{10f1} and \eqref{10f2}. Introduce the Hilbert spaces
\begin{equation*}
\mathcal{D}^{s,\varphi}(\Omega,\Gamma):=H^{s,\varphi}(\Omega)\oplus
\bigoplus_{k=1}^{\varkappa}H^{s+r_{k}-1/2,\varphi}(\Gamma)
\end{equation*}
and
\begin{equation*}
\mathcal{E}_{s,\varphi}(\Omega,\Gamma):=H^{s-2q,\varphi}(\Omega)\oplus
\bigoplus_{j=1}^{q+\varkappa}H^{s-m_{j}-1/2,\varphi}(\Gamma)
\end{equation*}
for arbitrary $s\in\mathbb{R}$ and $\varphi\in\mathcal{M}$.

According to \cite[Theorem~1]{ChepurukhinaMurach15MFAT1}, this mapping extends uniquely (by continuity) to a Fredholm bounded operator
\begin{equation}\label{Fredholm-positive-scale}
\Lambda:\mathcal{D}^{s,\varphi}(\Omega,\Gamma)\to
\mathcal{E}_{s,\varphi}(\Omega,\Gamma)
\end{equation}
for all $s>2q-1/2$ and $\varphi\in\mathcal{M}$. The finite-dimensional kernel of the operator \eqref{Fredholm-positive-scale} lies in
$$
\mathcal{D}^{\infty}(\overline{\Omega},\Gamma):=
C^{\infty}(\overline{\Omega})\times(C^{\infty}(\Gamma))^{\varkappa}
$$
and together with the finite index of \eqref{Fredholm-positive-scale} does not depend on $s$ and~$\varphi$. Let $N$ denote the kernel, and let $\vartheta$ stand for the index.

This result cannot be spread to all real $s$ without changes in its formulation. This follows from the known fact that the trace operator $u\mapsto u\!\upharpoonright\!\Gamma$, where $u\in C^{\infty}(\overline{\Omega})$, cannot be extended to a continuous mapping from $H^{s}(\Omega)$ to $\mathcal{D}'(\Gamma)$ if $s\leq1/2$.

In the $s\leq2q-1/2$ case, the boundary data $g_{j}\in H^{s-m_{j}-1/2,\varphi}(\Gamma)$ may be irregular distributions
(so called, rough data). Examining this case, we assume that $f\in L_{2}(\Omega)$, which allows us to use an $s<2q$ version \cite[Theorem~1]{MurachChepurukhina15UMJ5} of the above result. This version involves the linear space
$$
H^{s,\varphi}_{A}(\Omega):=\bigl\{u\in H^{s,\varphi}(\Omega):
Au\in L_{2}(\Omega)\bigr\}
$$
endowed with the graph norm
\begin{equation*}
\|u\|_{s,\varphi;\Omega,A}:=
\bigl(\|u\|^{2}_{s,\varphi;\Omega}+\|Au\|^{2}_{\Omega}\bigr)^{1/2}.
\end{equation*}
Here, $s<2q$; $\varphi\in\mathcal{M}$; $\|\cdot\|_{\Omega}$ is the norm in the Hilbert space $L_{2}(\Omega)$ of square integrable functions over $\Omega$, and $Au$ is understood in the sense of the theory of distributions on $\Omega$. The space $H^{s,\varphi}_{A}(\Omega)$ is Hilbert, and $C^{\infty}(\overline{\Omega})$ is dense in this space   \cite[Section~4]{MurachChepurukhina15UMJ5}. Note that $H^{s,\varphi}_{A}(\Omega)$ depends essentially on $A$ (even when all coefficients of $A$ are constant), which was shown by H\"ormander \cite[Theorem~3.1]{Hermander55}
in the case where $s=0$ and $\varphi(\cdot)\equiv1$. Consider the Hilbert spaces
\begin{equation*}
\mathcal{D}^{s,\varphi}_{A}(\Omega,\Gamma):=H^{s,\varphi}_{A}(\Omega)\oplus
\bigoplus_{k=1}^{\varkappa}H^{s+r_{k}-1/2,\varphi}(\Gamma)
\end{equation*}
and
\begin{equation*}
\mathcal{E}^{0,s,\varphi}(\Omega,\Gamma):=L_{2}(\Omega)\oplus
\bigoplus_{j=1}^{q+\varkappa}H^{s-m_{j}-1/2,\varphi}(\Gamma).
\end{equation*}

\begin{proposition}\label{10pr1}
The mapping \eqref{mapping} extends uniquely (by continuity) to a bounded operator
\begin{equation}\label{10f5}
\Lambda:\mathcal{D}^{s,\varphi}_{A}(\Omega,\Gamma)\rightarrow
\mathcal{E}^{0,s,\varphi}(\Omega,\Gamma)
\end{equation}
for arbitrary $s<2q$ and $\varphi\in\mathcal{M}$. This operator is Fredholm. Its kernel coincides with $N$, and its index equals $\vartheta$.
\end{proposition}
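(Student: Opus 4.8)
The plan is to deduce Proposition~\ref{10pr1} from the already established Fredholm property of the operator \eqref{Fredholm-positive-scale} on the positive part of the scale, together with the solvability theorem \cite[Theorem~1]{MurachChepurukhina15UMJ5} for the model case $\varphi(\cdot)\equiv1$, by means of quadratic interpolation. First I would recall that \cite[Theorem~1]{MurachChepurukhina15UMJ5} gives the bounded Fredholm operator
\begin{equation*}
\Lambda:\mathcal{D}^{s}_{A}(\Omega,\Gamma)\to\mathcal{E}^{0,s}(\Omega,\Gamma),
\qquad s<2q,
\end{equation*}
with kernel $N$ and index $\vartheta$ independent of $s$; this is the $\varphi(\cdot)\equiv1$ instance of what we must prove. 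So the task reduces to passing from Sobolev spaces to the refined scale for a fixed but arbitrary $s<2q$.

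The key step is the interpolation argument. Fix $s<2q$ and $\varphi\in\mathcal{M}$, and pick $\varepsilon,\delta>0$ so small that $s+\delta<2q$. By the interpolation property of the refined Sobolev scale recalled in Section~\ref{sec3}, each of the function spaces $H^{s+r_k-1/2,\varphi}(\Gamma)$, $H^{s-m_j-1/2,\varphi}(\Gamma)$, and $H^{s,\varphi}(\Omega)$ is obtained by quadratic interpolation with a suitable function parameter between the corresponding pair of Sobolev spaces of orders $s_0-\varepsilon$ and $s_0+\delta$. I would next show that $H^{s,\varphi}_A(\Omega)$ is likewise an interpolation space between $H^{s-\varepsilon}_A(\Omega)$ and $H^{s+\delta}_A(\Omega)$: since these are graph spaces of the operator $A$ acting into the common target $L_2(\Omega)$, this follows from the standard fact that quadratic interpolation commutes with forming the graph space of a closed operator with fixed target, exactly as in \cite[Section~4]{MurachChepurukhina15UMJ5} (where the density of $C^\infty(\overline\Omega)$ and the Hilbert structure of $H^{s,\varphi}_A(\Omega)$ are established by the same device). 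Consequently, $\mathcal{D}^{s,\varphi}_A(\Omega,\Gamma)$ and $\mathcal{E}^{0,s,\varphi}(\Omega,\Gamma)$ are obtained by quadratic interpolation (with the same function parameter, after a routine reindexing) from the pairs $\bigl[\mathcal{D}^{s-\varepsilon}_A(\Omega,\Gamma),\mathcal{D}^{s+\delta}_A(\Omega,\Gamma)\bigr]$ and $\bigl[\mathcal{E}^{0,s-\varepsilon}(\Omega,\Gamma),\mathcal{E}^{0,s+\delta}(\Omega,\Gamma)\bigr]$, because interpolation commutes with finite orthogonal sums.

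Now the operator $\Lambda$ is bounded and Fredholm on both endpoint pairs, with the \emph{same} kernel $N\subset\mathcal{D}^\infty(\overline\Omega,\Gamma)$ and the same index $\vartheta$. I would invoke the standard interpolation theorem for Fredholm operators (for Hilbert spaces with quadratic interpolation, as in \cite[Section~1.1.6]{MikhailetsMurach14} or \cite{Geymonat}): if a linear operator acts boundedly and Fredholm-ly on both endpoint pairs of a compatible interpolation couple, and its kernels on the two pairs coincide, then it acts boundedly and Fredholm-ly on the interpolated pair, with the same kernel and the same index. Applying this to $\Lambda$ yields the bounded Fredholm operator \eqref{10f5} with kernel $N$ and index $\vartheta$, for the fixed $s$ and $\varphi$. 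Uniqueness of the continuous extension of the mapping \eqref{mapping} is automatic from the density of $\mathcal{D}^\infty(\overline\Omega,\Gamma)$ in $\mathcal{D}^{s,\varphi}_A(\Omega,\Gamma)$.

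The main obstacle I anticipate is the bookkeeping needed to see that $H^{s,\varphi}_A(\Omega)$ really is an interpolation space between the two Sobolev-type graph spaces, and that the function parameter produced there is compatible (after reindexing by the shifts $r_k-1/2$ and $-m_j-1/2$) with the one produced on each boundary summand, so that a single interpolation functor simultaneously realizes all the summands of $\mathcal{D}^{s,\varphi}_A$ and of $\mathcal{E}^{0,s,\varphi}$. This is exactly the kind of argument carried out in \cite{MikhailetsMurach14} for elliptic problems without unknowns on the boundary and in \cite{MurachChepurukhina15UMJ5}; here it only has to be combined with the fact that an orthogonal sum of interpolation-compatible couples interpolates coordinatewise with the same parameter. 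Everything else — boundedness on the endpoints, coincidence of kernels, stability of the index — is supplied by the cited results and by the general interpolation theorem for Fredholm operators.
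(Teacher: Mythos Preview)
The paper does not prove Proposition~\ref{10pr1} at all: immediately after stating it, the paper writes ``This result was proved in \cite[Theorem~1]{MurachChepurukhina15UMJ5}'' and moves on. So there is no ``paper's own proof'' to compare against; the proposition is imported wholesale from the earlier paper.

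Your proposal rests on a misreading of that citation. You treat \cite[Theorem~1]{MurachChepurukhina15UMJ5} as supplying only the Sobolev case $\varphi(\cdot)\equiv1$ and then set out to interpolate up to general $\varphi\in\mathcal{M}$. But the present paper asserts that \cite[Theorem~1]{MurachChepurukhina15UMJ5} already gives the full statement for arbitrary $\varphi\in\mathcal{M}$ (note the title of that reference mentions both Sobolev \emph{and} H\"ormander spaces). So from the paper's point of view, nothing needs to be proved here: one simply invokes the citation.

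That said, the interpolation strategy you outline---interpolate the graph spaces $H^{s\mp}_A(\Omega)$ to obtain $H^{s,\varphi}_A(\Omega)$, interpolate the boundary summands with the same function parameter, and then apply the Fredholm interpolation theorem with common kernel $N$---is exactly the machinery used in \cite{MurachChepurukhina15UMJ5} and in \cite[Chapter~1]{MikhailetsMurach14} to establish such results. So your sketch is a faithful reconstruction of how the cited theorem is actually proved; it is just redundant in the context of this paper. The one place where you should be more careful is the claim that ``quadratic interpolation commutes with forming the graph space of a closed operator with fixed target'': this is not an automatic abstract fact and in \cite{MurachChepurukhina15UMJ5} it is handled by exhibiting $H^{s,\varphi}_A(\Omega)$ as (an isomorphic copy of) a retract of a direct sum of ordinary H\"ormander spaces, via a right parametrix of $A$, rather than by a general graph-interpolation lemma.
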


This result was proved in \cite[Theorem~1]{MurachChepurukhina15UMJ5}.
We will supplement it with theorems on local (up to the boundary $\Gamma$) regularity and a local \textit{a priori} estimate of the generalized solutions to the elliptic problem \eqref{10f1},~\eqref{10f2}. Beforehand, using Proposition~\ref{10pr1}, we give a definition of such a solution.

Put
$$
\mathcal{S}'_{A}(\Omega):=\{u\in\mathcal{S}'(\Omega):
Au\in L_{2}(\Omega)\},
$$
where, as usual, $\mathcal{S}'(\Omega)$ is the space of the restrictions of all distributions $w\in\mathcal{S}'(\mathbb{R}^{n})$ to~$\Omega$.
Since $\Omega$ is bounded, the space $\mathcal{S}'_{A}(\Omega)$ is the union of all $H^{s,\varphi}_{A}(\Omega)$ such that $s<2q$ and $\varphi\in\mathcal{M}$.

Assume that
\begin{equation}\label{10f6}
(u,v):=(u,v_{1},\ldots,v_{\varkappa})\in
\mathcal{S}'_{A}(\Omega)\times(\mathcal{D}'(\Gamma))^{\varkappa}
\end{equation}
and
\begin{equation*}
(f,g):=(f,g_{1},\ldots,g_{q+\varkappa})\in
L_{2}(\Omega)\times(\mathcal{D}'(\Gamma))^{q+\varkappa}.
\end{equation*}
The vector \eqref{10f6} is called a generalized (strong) solution to the boundary-value problem \eqref{10f1}, \eqref{10f2} if $\Lambda(u,v)=(f,g)$ for some operator \eqref{10f5} from Proposition~\ref{10pr1}. This definition is reasonable because $(u,v)\in\mathcal{D}^{s,\varphi}_{A}(\Omega,\Gamma)$ for sufficiently small $s<2q$ and every $\varphi\in\mathcal{M}$ and because the image $\Lambda(u,v)$ does not depend on these $s$ and $\varphi$.

Now we introduce local versions of the spaces $H^{l,\varphi}(\Omega)$ and $H^{l,\varphi}(\Gamma)$, where $l\in\mathbb{R}$ and $\varphi\in\mathcal{M}$. We need these versions to formulate a theorem on local regularity of a generalized solution to the problem under investigation. Let $U$ be an open subset of $\mathbb{R}^{n}$ such that $\Omega_{0}:=\Omega\cap U\neq\emptyset$ and $\Gamma_{0}:=\Gamma\cap U\neq\emptyset$. We let $H^{l,\varphi}_{\mathrm{loc}}(\Omega_{0},\Gamma_{0})$ denote the linear space of all distributions $u\in\mathcal{S}'(\Omega)$ such that $\chi u\in H^{l,\varphi}(\Omega)$ for every function $\chi\in C^{\infty}(\overline{\Omega})$ satisfying $\mathrm{supp}\,\chi\subset\Omega_0\cup\Gamma_{0}$. Analogously,
$H^{l,\varphi}_{\mathrm{loc}}(\Gamma_{0})$ denotes the linear space of all distributions $h\in\mathcal{D}'(\Gamma)$ such that $\chi h\in H^{l,\varphi}(\Gamma)$ for every function $\chi\in C^{\infty}(\Gamma)$ satisfying $\mathrm{supp}\,\chi\subset\Gamma_{0}$.

\begin{theorem}\label{10th1}
Let $s<2q$ and $\varphi\in\mathcal{M}$. Assume that a vector \eqref{10f6} is a generalized solution to the elliptic problem \eqref{10f1}, \eqref{10f2} whose right-hand sides satisfy the conditions $f\in L_2(\Omega)$ and $g_j\in H^{s-m_j-1/2,\varphi}_{\mathrm{loc}}(\Gamma_0)$ for each $j\in\{1,\ldots,q+\varkappa\}$. Then $u\in H^{s,\varphi}_{\mathrm{loc}}(\Omega_{0},\Gamma_{0})$ and  $v_k\in H^{s+r_k-1/2,\varphi}_{\mathrm{loc}}(\Gamma_0)$ for each $k\in\{1,\ldots,\varkappa\}$.
\end{theorem}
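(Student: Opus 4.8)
The plan is to localize the problem by multiplying the solution by a cut-off function and to reduce the local regularity assertion to the global solvability statement in Proposition~\ref{10pr1}. The standard difficulty with this classical scheme is flagged in the introduction: the space $H^{s,\varphi}_A(\Omega)$ is not invariant under multiplication by smooth cut-offs, because $A(\chi u)$ need not lie in $L_2(\Omega)$ unless $u$ already has enough regularity near $\Gamma_0$. So the argument must proceed by a finite induction that increases the regularity of $u$ in small steps, each step using only the already-known regularity to control the commutator $[A,\chi]u$. I would carry this out as follows.

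\textbf{Step 1 (starting point and commutator bookkeeping).} Since $(u,v)$ is a generalized solution, there is $\sigma<2q$ with $(u,v)\in\mathcal{D}^{\sigma,\varphi}_A(\Omega,\Gamma)$; without loss of generality $\sigma\leq s$ (if $\sigma\geq s$ there is nothing to prove). Fix functions $\chi,\eta\in C^\infty(\overline\Omega)$ with $\operatorname{supp}\chi\subset\operatorname{int}(\operatorname{supp}\eta)$ and $\operatorname{supp}\eta\subset\Omega_0\cup\Gamma_0$, and with $\eta\equiv1$ on a neighbourhood of $\operatorname{supp}\chi$. Applying $\Lambda$ to the pair $(\chi u,\chi v_1,\dots,\chi v_\varkappa)$ and using $Au=f$, $B_ju+\sum_k C_{j,k}v_k=g_j$, one computes
\begin{gather*}
A(\chi u)=\chi f+[A,\chi]u=\chi f+A'(\eta u),\\
B_j(\chi u)+\sum_{k=1}^{\varkappa}C_{j,k}(\chi v_k)=\chi g_j+[B_j,\chi](\eta u)+\sum_{k=1}^{\varkappa}[C_{j,k},\chi](\eta v_k),
\end{gather*}
where $A':=[A,\chi]$ is a PDO of order $\leq2q-1$ supported in $\operatorname{supp}\nabla\chi$, and the commutators $[B_j,\chi]$, $[C_{j,k},\chi]$ have orders $\leq m_j-1$ and $\leq m_j+r_k-1$ respectively. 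The point of inserting $\eta$ is that each commutator annihilates functions constant near $\operatorname{supp}\chi$, so it only sees $\eta u$ and $\eta v_k$, which are globally defined on $\Omega$ and $\Gamma$.

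\textbf{Step 2 (one induction step).} Suppose inductively that $u\in H^{\lambda,\varphi}_{\mathrm{loc}}(\Omega_0,\Gamma_0)$ and $v_k\in H^{\lambda+r_k-1/2,\varphi}_{\mathrm{loc}}(\Gamma_0)$ for some $\lambda\in[\sigma,s]$, with $\lambda=\sigma$ and $\varphi\equiv1$ at the start (here I use the embedding $H^{\sigma,\varphi}(\Omega)\hookrightarrow H^{\sigma'}(\Omega)$, $\sigma'<\sigma$, to begin on the plain Sobolev scale). Choose $\eta$ above so that $\eta u\in H^{\lambda,\varphi}(\Omega)$ and $\eta v_k\in H^{\lambda+r_k-1/2,\varphi}(\Gamma)$; since $\sigma<2q$ one also has $Au=f\in L_2(\Omega)$, hence $\chi u\in H^{\lambda,\varphi}_A(\Omega)$, so the pair $(\chi u,\chi v)$ lies in $\mathcal{D}^{\lambda,\varphi}_A(\Omega,\Gamma)$. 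Now set $\mu:=\min\{\lambda+1,s\}$. By Step~1 and the mapping properties of the commutators, the right-hand side of the localized system lies in $\mathcal{E}^{0,\mu,\varphi}(\Omega,\Gamma)$: indeed $A'(\eta u)\in H^{\lambda-(2q-1),\varphi}(\Omega)$ and, since $\lambda-(2q-1)\geq\sigma-2q+1$ can be assumed $\geq0$ by an extra preliminary step (or simply $\chi f+A'(\eta u)\in L_2(\Omega)$ whenever $\lambda\geq2q-1$), while $\chi g_j\in H^{\mu-m_j-1/2,\varphi}_{\mathrm{loc}}(\Gamma_0)$ and in fact $\chi g_j\in H^{s-m_j-1/2,\varphi}(\Gamma)$ by hypothesis, and the $\Gamma$-commutator terms lie in $H^{\lambda-m_j+1/2,\varphi}(\Gamma)\hookrightarrow H^{\mu-m_j-1/2,\varphi}(\Gamma)$. (The clean way to handle the $A'(\eta u)$ term is to note that $A(\chi u)=\chi f+A'(\eta u)$ with $\chi f\in L_2$ and $A'(\eta u)\in H^{\lambda-2q+1,\varphi}(\Omega)$, hence $\chi u\in H^{\mu,\varphi}_A(\Omega)$ once $\lambda-2q+1\ge 0$; for the first finitely many steps, where $\lambda-2q+1<0$, one argues directly with $H^{\mu,\varphi}_A$ since its first component only requires $A(\chi u)\in L_2$, which already holds.) Applying Proposition~\ref{10pr1} with parameter $\mu$ — more precisely, applying it to the difference $(\chi u,\chi v)-(u_0,v_0)$ for a suitable preimage $(u_0,v_0)\in N$ is unnecessary; one just uses that $\Lambda$ on $\mathcal{D}^{\mu,\varphi}_A$ is Fredholm with the same kernel $N\subset\mathcal{D}^\infty(\overline\Omega,\Gamma)$, so $\Lambda(\chi u,\chi v)\in\mathcal{E}^{0,\mu,\varphi}$ forces $(\chi u,\chi v)\in\mathcal{D}^{\mu,\varphi}_A(\Omega,\Gamma)$ modulo $N$, whence $(\chi u,\chi v)\in\mathcal{D}^{\mu,\varphi}_A(\Omega,\Gamma)$ — we conclude $\chi u\in H^{\mu,\varphi}(\Omega)$ and $\chi v_k\in H^{\mu+r_k-1/2,\varphi}(\Gamma)$. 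Since $\chi$ was arbitrary with support in $\Omega_0\cup\Gamma_0$, this upgrades the local regularity of $u$ and $v_k$ from level $\lambda$ to level $\mu$.

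\textbf{Step 3 (iteration and conclusion).} Starting from $\lambda=\sigma'$ (plain Sobolev) and iterating Step~2 finitely many times, the index reaches $\lfloor s\rfloor$ or $s$ after at most $\lceil s-\sigma'\rceil$ steps, at which point one inserts the genuine function parameter $\varphi$ in the final step $\mu=s$: the same commutator estimates show the localized right-hand side lies in $\mathcal{E}^{0,s,\varphi}(\Omega,\Gamma)$ (the worst term $\chi g_j$ lies there by hypothesis, and the lower-order terms lie in $H^{s-1-m_j+\ldots}$-type spaces, hence in the required space via the embedding $H^{s-1/2,\psi}\hookrightarrow H^{s-1/2,\varphi}$ for $\psi\equiv1$), and Proposition~\ref{10pr1} with parameters $(s,\varphi)$ gives $\chi u\in H^{s,\varphi}(\Omega)$ and $\chi v_k\in H^{s+r_k-1/2,\varphi}(\Gamma)$ for every admissible $\chi$. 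This is exactly the assertion $u\in H^{s,\varphi}_{\mathrm{loc}}(\Omega_0,\Gamma_0)$ and $v_k\in H^{s+r_k-1/2,\varphi}_{\mathrm{loc}}(\Gamma_0)$.

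\textbf{Main obstacle.} The delicate point is Step~2 in the regime $\lambda<2q-1$, i.e. the first few induction steps, precisely because $H^{\lambda,\varphi}_A(\Omega)$ is not closed under multiplication by $\chi$: one must check that $A(\chi u)=\chi f+A'(\eta u)$ still lies in $L_2(\Omega)$ and not merely in some negative-order space. This is true here because $f\in L_2(\Omega)$ globally and $A'(\eta u)$, though only of regularity $\lambda-2q+1<0$ a priori, is actually irrelevant for membership in the space $H^{\mu,\varphi}_A(\Omega)$: that space only requires $A(\chi u)\in L_2$, and $A(\chi u)=A\chi u$ computed distributionally equals $\chi Au+A'u=\chi f+A'u$ — wait, this needs $A' u\in L_2$, which follows once $u$ has local regularity $\geq 2q-1$ near $\operatorname{supp}\nabla\chi$. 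So in fact the honest route is to first run the induction on the plain Sobolev scale using the classical elliptic regularity embedded in Proposition~\ref{10pr1} until $\lambda\geq 2q-1$ on $\operatorname{supp}\eta$, after which $A'(\eta u)\in H^{\lambda-2q+1}(\Omega)\subset L_2(\Omega)$ and all subsequent steps, including the final $\varphi$-step, are routine. Handling this initial bootstrapping carefully — in particular verifying that the localized data indeed land in $\mathcal{E}^{0,\lambda+1,\varphi}$ and not a worse space when $\lambda+1\le 2q-1$ — is where the real work lies; everything else is commutator bookkeeping plus one application of Proposition~\ref{10pr1} per step.
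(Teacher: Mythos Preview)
Your proposal has a genuine gap at exactly the point you yourself flag as the ``main obstacle,'' and your proposed resolution is circular. The induction in Step~2 requires, at each stage, that $(\chi u,\chi v)\in\mathcal{D}^{\lambda,\varphi}_{A}(\Omega,\Gamma)$, in particular $A(\chi u)\in L_{2}(\Omega)$. But $A(\chi u)=\chi f+A'(\eta u)$ with $A'$ of order $2q-1$, and $A'(\eta u)\in L_{2}(\Omega)$ only once $\eta u\in H^{2q-1}(\Omega)$, i.e.\ once the local regularity index already satisfies $\lambda\geq 2q-1$. For the initial steps, where $\lambda$ may be far below $2q-1$, the first component $A(\chi u)$ lies only in $H^{\lambda-2q+1,\varphi}(\Omega)$, a negative-order space, so neither does $(\chi u,\chi v)$ belong to the domain of the operator in Proposition~\ref{10pr1}, nor does the localized right-hand side land in $\mathcal{E}^{0,\mu,\varphi}(\Omega,\Gamma)$. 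Your suggestion to ``first run the induction \dots\ until $\lambda\geq 2q-1$'' therefore begs the question: running the induction is precisely what requires $\lambda\geq 2q-1$. No other tool in the paper gives boundary regularity at these low orders, so the bootstrap cannot start.

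The paper avoids this trap by a different mechanism. Instead of bootstrapping with Proposition~\ref{10pr1}, it (i) uses the Fredholm property once at the target level to write $\eta(f,g)=\Lambda(u',v')+(f'',g'')$ with $(u',v')\in\mathcal{D}^{s,\varphi}_{A}(\Omega,\Gamma)$ and $(f'',g'')$ smooth, so that $(u-u',v-v')$ has data that are smooth on $\mathrm{supp}\,\chi$; (ii) passes from $H^{l}_{A}(\Omega)$ to the Roitberg space $H^{l,(2q)}(\Omega)$ via the continuous embedding; and (iii) invokes the already-established local lifting property in the Roitberg scale $\mathcal{D}^{\sigma,(2q)}(\Omega,\Gamma)$ (cited from \cite[Theorem~2.4.3]{Roitberg99}). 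The point is that the Roitberg spaces \emph{are} stable under multiplication by $C^{\infty}(\overline{\Omega})$ functions, so the commutator obstruction disappears in that scale; one lifts $\chi(u-u',v-v')$ to $\bigcap_{\sigma}\mathcal{D}^{\sigma,(2q)}\subset\mathcal{D}^{s,\varphi}$ and then adds back $\chi(u',v')$. Your scheme could be salvaged, but only by importing this same Roitberg-space machinery for the low-order steps, which amounts to the paper's argument.
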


Note that the definition of $H^{l,\varphi}_{\mathrm{loc}}(\Omega_{0},\Gamma_{0})$ makes sense in the $\Gamma_{0}=\emptyset$ case. It follows from condition \eqref{10f6} and the ellipticity of the PDO $A$ that $u\in H^{2q}_{\mathrm{loc}}(\Omega_{0},\emptyset)$ (see, e.g., \cite[Theorem 7.4.1]{Hermander63}).

Now we formulate a theorem on a local \textit{a priori} estimate of the generalized solution to the problem under investigation. Let $\|\cdot\|'_{s,\varphi}$ denote the norm in the Hilbert space $\mathcal{D}^{s,\varphi}(\Omega,\Gamma)$, and let $\|\cdot\|''_{0,s,\varphi}$ stand for the norm in the Hilbert space $\mathcal{E}^{0,s,\varphi}(\Omega,\Gamma)$.

\begin{theorem}\label{10th2}
Let $s<2q$ and $\varphi\in\mathcal{M}$. Assume that a vector \eqref{10f6} satisfies the hypotheses of Theorem~$\ref{10th1}$. We arbitrarily choose a number $\lambda>0$ and functions $\chi,\eta\in C^{\infty}(\overline{\Omega})$ such that $\mathrm{supp}\,\chi\subset\mathrm{supp}\,\eta\subset
\Omega_{0}\cup\Gamma_{0}$ and that $\eta=1$ in a neighbourhood of $\mathrm{supp}\,\chi$. Then
\begin{equation}\label{10f9}
\|\chi(u,v)\|'_{s,\varphi}\leq c\,\bigl(\|\eta(f,g)\|''_{0,s,\varphi}+
\|\eta(u,v)\|'_{s-\lambda,\varphi}\bigr)
\end{equation}
for a certain number $c>0$ that does not depend on $(u,v)$ and $(f,g)$.
\end{theorem}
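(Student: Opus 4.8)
The plan is to derive the local \textit{a priori} estimate \eqref{10f9} from the global Fredholm estimate attached to the operator \eqref{10f5} by a standard commutator argument, exploiting the fact that the boundary-value problem \eqref{10f1}, \eqref{10f2} is local in nature. First I would use Proposition~\ref{10pr1}: since $(u,v)$ is a generalized solution with $f\in L_2(\Omega)$ and $g\in H^{s-m_j-1/2,\varphi}_{\mathrm{loc}}(\Gamma_0)$, Theorem~\ref{10th1} gives $(u,v)\in\mathcal{D}^{s,\varphi}_A(\Omega,\Gamma)$ locally, and the Fredholm property of $\Lambda$ on $\mathcal{D}^{s,\varphi}_A(\Omega,\Gamma)\to\mathcal{E}^{0,s,\varphi}(\Omega,\Gamma)$ yields, for any $w=(w_0,w_1,\dots,w_\varkappa)$ in the domain, a global estimate of the form
\begin{equation*}
\|w\|_{\mathcal{D}^{s,\varphi}_A}\leq c_1\bigl(\|\Lambda w\|_{\mathcal{E}^{0,s,\varphi}}+\|w\|_{\mathcal{D}^{s-\lambda,\varphi}_A}\bigr),
\end{equation*}
the last (compact) term absorbing the finite-dimensional kernel $N$. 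I would apply this to $w=\chi(u,v)$, i.e. to $(\chi u,\chi v_1,\dots,\chi v_\varkappa)$, which is legitimate once we know $\chi(u,v)$ lies in the relevant space for the given $s$ — this is exactly where Theorem~\ref{10th1} (already proved) is invoked.

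The core of the argument is then to compute $\Lambda(\chi u,\chi v)$ and compare it with $\chi\Lambda(u,v)=\chi(f,g)$. Writing $A(\chi u)=\chi Au+A'u$ where $A'=[A,\chi]$ is a differential operator on $\overline\Omega$ of order $\leq 2q-1$ supported in $\operatorname{supp}\chi$, and similarly $B_j(\chi u)=\chi B_ju+B_j'u$ with $\operatorname{ord}B_j'\leq m_j-1$, and $C_{j,k}(\chi v_k)=\chi C_{j,k}v_k+C_{j,k}'v_k$ with $\operatorname{ord}C_{j,k}'\leq m_j+r_k-1$, we get
\begin{equation*}
\Lambda(\chi u,\chi v)=\chi(f,g)+\bigl(A'u,\;B_1'u+\textstyle\sum_k C_{1,k}'v_k,\;\dots\bigr).
\end{equation*}
Now insert the cut-off $\eta$, which equals $1$ on a neighbourhood of $\operatorname{supp}\chi\supseteq\operatorname{supp}A'\cup\operatorname{supp}B_j'\cup\operatorname{supp}C_{j,k}'$: the commutator terms are unchanged if we replace $u$ by $\eta u$ and $v_k$ by $\eta v_k$, so $A'u=A'(\eta u)$ etc. Since $A':H^{s-\lambda',\varphi}(\Omega)\to L_2(\Omega)$ is bounded for $s-\lambda'\geq 2q-1$... but $s$ can be small, so instead one uses that $A'$ maps $H^{s-1,\varphi}_A(\Omega)$-type spaces appropriately: more carefully, since $A'$ has order $\leq 2q-1$ and $\eta u\in H^{s-\lambda,\varphi}_A(\Omega)$ (with $\lambda>0$ chosen, say, $\leq 1$, or handled by a finite iteration), the commutator contributions are bounded by $c\,\|\eta(u,v)\|'_{s-\lambda,\varphi}$ after using the embeddings \eqref{10f3} and the boundedness of $A',B_j',C_{j,k}'$ between the relevant $H^{s,\varphi}$ spaces; the same slowly-varying-function machinery (interpolation, \cite[Theorems 1.14, 2.2, 3.2]{MikhailetsMurach14}) that underlies Proposition~\ref{10pr1} applies verbatim here since $\varphi$ plays no active role in order counting. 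Plugging the estimate for $\Lambda(\chi u,\chi v)$ into the global inequality and bounding $\|\chi\eta(f,g)\|''_{0,s,\varphi}\leq c\|\eta(f,g)\|''_{0,s,\varphi}$ and $\|\chi(u,v)\|'_{s-\lambda,\varphi}\leq c\|\eta(u,v)\|'_{s-\lambda,\varphi}$, we obtain \eqref{10f9}.

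I would handle the gap between $\mathcal D^{s,\varphi}_A$ (graph norm of $A$) and the target norm $\|\cdot\|'_{s,\varphi}$ of $\mathcal D^{s,\varphi}(\Omega,\Gamma)$ by noting that, on the support of $\chi$, local elliptic interior regularity for $A$ (the remark after Theorem~\ref{10th1}, from \cite[Theorem~7.4.1]{Hermander63}) together with $Au=f\in L_2$ shows $\chi u\in H^{2q}(\Omega)$; but for $s<2q$ the needed bound $\|\chi u\|_{s,\varphi;\Omega}$ already follows directly from Theorem~\ref{10th1}'s conclusion $u\in H^{s,\varphi}_{\mathrm{loc}}$, so $\|\chi u\|_{s,\varphi;\Omega}$ is finite and the graph-norm term $\|A\chi u\|_\Omega$ is controlled as above via the commutator. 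Thus $\|\chi(u,v)\|'_{s,\varphi}\le\|\chi(u,v)\|_{\mathcal D^{s,\varphi}_A}$ up to the $L_2$-part, which is exactly $\|A\chi u\|_\Omega=\|\chi f+A'u\|_\Omega$ and is absorbed into the right-hand side of \eqref{10f9}.

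The main obstacle I anticipate is bookkeeping the orders so that every commutator term is genuinely estimated by $\|\eta(u,v)\|'_{s-\lambda,\varphi}$ for the \emph{given} $\lambda>0$ rather than for $\lambda=1$: when $0<\lambda<1$ one must iterate the commutator estimate finitely many times (a bootstrap on a chain $s-\lambda,\ s-2\lambda,\dots$ down to some $s-N\lambda$ where the trace theorems are classically valid), each step using the already-established Theorem~\ref{10th1} to guarantee membership in the next space, and then telescoping the resulting inequalities. A secondary technical point is justifying that the commutators $[A,\chi]$, $[B_j,\chi]$, $[C_{j,k},\chi]$ act boundedly on the refined scale with the claimed order drops — this is routine for differential operators with smooth coefficients but should be stated, relying on the boundedness of such operators between $H^{\sigma,\varphi}$ spaces \cite[Sections 2.1, 3.2.1]{MikhailetsMurach14}. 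Once these are in place, \eqref{10f9} follows by combining the global Fredholm estimate with the commutator identities, which is the structure I would write out.
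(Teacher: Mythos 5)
Your proposal is essentially the ``routine'' commutator argument that the paper explicitly rules out at the beginning of Section~\ref{sec5}, and the step that fails is the estimation of the commutator term $A'u=[A,\chi]u$ in the $L_{2}(\Omega)$-norm. The operator $A'$ has order $2q-1$, so the best available bound is $\|A'(\eta u)\|_{\Omega}\leq c\,\|\eta u\|_{2q-1;\Omega}$, and this norm is \emph{not} dominated by $\|\eta u\|_{s-\lambda,\varphi;\Omega}$ once $s<2q-1$ (nor by any term on the right of \eqref{10f9}); the graph norm of $H^{s-\lambda,\varphi}_{A}(\Omega)$ does not help either, since controlling $Au$ in $L_{2}$ gives no control of derivatives of order $2q-1$ near $\Gamma$ in the absence of boundary conditions. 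Worse, for $s<2q-1$ the distribution $A'(\eta u)$ need not lie in $L_{2}(\Omega)$ at all, so $\chi(u,v)$ need not belong to $\mathcal{D}^{s,\varphi}_{A}(\Omega,\Gamma)$, and the global Peetre-type estimate for the operator \eqref{10f5} cannot even be applied to $w=\chi(u,v)$; this is precisely the sense in which $H^{s,\varphi}_{A}(\Omega)$ fails to be closed under multiplication by cut-off functions. Your fallback --- that interior elliptic regularity gives $\chi u\in H^{2q}(\Omega)$ --- is also unavailable: \cite[Theorem~7.4.1]{Hermander63} yields $H^{2q}$-regularity only on compact subsets of $\Omega$, whereas the relevant cut-offs $\chi$ have support meeting $\Gamma_{0}$, where $u$ is only $H^{s,\varphi}$ because the boundary data are irregular. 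The proposed bootstrap along the chain $s-\lambda,\ s-2\lambda,\dots$ does not repair this, because the obstruction is not the size of $\lambda$ but the mismatch between the $L_{2}$-norm of $f$ on the right of \eqref{10f9} and the order $s-2q$ that the elliptic estimate actually requires there.

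The paper's proof circumvents exactly this difficulty by working in Roitberg's modified scale: the operator $\Lambda$ is Fredholm from $\mathcal{D}^{\sigma,\varphi,(2q)}(\Omega,\Gamma)$ to $\mathcal{E}_{\sigma,\varphi,(0)}(\Omega,\Gamma)$ for \emph{all} real $\sigma$ with matching orders, so there the commutator genuinely loses one order and your induction-plus-Peetre scheme works verbatim (this is \eqref{proof2-f13}--\eqref{proof2-f17}). The remaining --- and substantial --- work is to translate the resulting estimate back into the $H^{s,\varphi}_{A}$/$L_{2}$ setting of \eqref{10f9}: this uses a localization to a smooth subdomain $W_{0}$, the equivalence \eqref{proof2-f21} between the Roitberg graph norm and the distributional graph norm of $A$, and a separate approximation argument to pass from $(u,v)\in\mathcal{D}^{\infty}(\overline{\Omega},\Gamma)$ to an arbitrary generalized solution. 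None of these ingredients appears in your proposal, and without the detour through the two-sided modified scale the estimate \eqref{10f9} does not follow.
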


Here, of course, $\chi(u,v)$ means $(\chi u,(\chi\!\upharpoonright\!\Gamma)v_{1},\ldots,
(\chi\!\upharpoonright\!\Gamma)v_{\varkappa})$, and $\eta(f,g)$ is analogously interpreted. These theorems are new in the Sobolev case of $\varphi(\cdot)\equiv1$ even where $s$ is an integer. They consist the local (up to the boundary) lifting property of the generalized solution
$(u,v)$.

\section{Proofs}\label{sec5}

If $\Omega_{0}=\Omega$ and $\Gamma_{0}=\Gamma$
and if $\chi(\cdot)\equiv\eta(\cdot)\equiv1$, Theorems \ref{10th1} and \ref{10th2} deal with global properties of the generalized solution $(u,v)$, i.e. with its properties in the whole domain $\Omega$ up to the boundary $\Gamma$. In this specific case, the theorems follows easily from Proposition~\ref{10pr1} and are given in \cite[Theorems 3 and~2]{MurachChepurukhina15UMJ5}. In the general case, Theorems \ref{10th1} and \ref{10th2} cannot be deduced from the global case in a routine manner used in \cite[Section~3.2.3]{KozlovMazyaRossmann97} and \cite[Section~2.4.4]{Roitberg99} for elliptic problems with unknowns on the boundary. This is caused by the following two circumstances: the space $H^{s,\varphi}_{A}(\Omega)$ is not closed with respect to the multiplication by functions from $C^{\infty}(\overline{\Omega})$, and the right-hand side of the inequality \eqref{10f9} contains the norm $\|\eta f\|_{\Omega}$ instead of $\|\eta f\|_{s-2q,\varphi}$. We therefore cannot take $\chi u$ instead of $u$ in the global versions of these theorems to treat the general case. Besides, if we interchange the PDO $A$ and the operator of the multiplication by $\chi$ according to the routine, we get
$$
\|A(\chi u)\|_{\Omega}\leq\|\chi Au\|_{\Omega}+\|A'(\eta u)\|_{\Omega}\leq\|\chi f\|_{\Omega}+\|\eta u\|_{2q-1;\Omega}
$$
whenever $u\in C^{\infty}(\overline{\Omega})$, which yields a trivial estimate (for $\chi u$) instead of \eqref{10f9} provided that $\nobreak{s<2q-1}$ (here, the PDO $A'$ is the commutator of these operators).

To prove Theorems \ref{10th1} and \ref{10th2}, we develop methods worked out in \cite[Section~5]{AnopDenkMurach20arxiv} and \cite[Section~6]{AnopKasirenkoMurach18UMJ3} for elliptic problems without unknowns on the boundary. These methods use property of elliptic problems in Sobolev spaces modified by Roitberg \cite{Roitberg64, Roitberg65} (see also his monograph \cite[Section~2]{Roitberg96}). For our purposes, we need the similar modification $H^{s,\varphi,(2q)}(\Omega)$ of the space $H^{s,\varphi}(\Omega)$. This modification was introduced and investigated in \cite{MikhailetsMurach08UMJ4} (see also the book \cite[Section~4.2.2]{MikhailetsMurach14}). In the Sobolev case of $\varphi(\cdot)\equiv1$, the space $H^{s,\varphi,(2q)}(\Omega)$ was introduced by Roitberg in~\cite{Roitberg64}.

Let $\varphi\in\mathcal{M}$. If $s\notin\{1/2,3/2,\ldots,2q-1/2\}$, then $H^{s,\varphi,(2q)}(\Omega)$ is defined to be the completion of $C^{\infty}(\overline{\Omega})$ with respect to the Hilbert norm
$$
\|u\|_{s,\varphi,(2q);\Omega}:=
\biggl(\|u\|_{s,\varphi,(0);\Omega}^{2}+
\sum_{j=1}^{k}\;\|(\partial_{\nu}^{j-1}u)\!\upharpoonright\!\Gamma\|
_{s-j+1/2,\varphi;\Gamma}^{2}\biggr)^{1/2}.
$$
Here, $\partial_{\nu}$ is the operator of the differentiation with respect to the inward normal to $\Gamma$, and $\|\cdot\|_{s,\varphi,(0);\Omega}$ is the norm in the Hilbert space
$$
H^{s,\varphi,(0)}(\Omega):=
\left\{\begin{array}{ll}
H^{s,\varphi}(\Omega)&\hbox{if}\;\;s\geq0\\
(H^{-s,1/\varphi}(\Omega))'&\hbox{if}\;\;s<0,
\end{array}\right.
$$
the dual space being considered with respect to the inner product in $L_{2}(\Omega)$. (Note that $\varphi\in\mathcal{M}\Leftrightarrow 1/\varphi\in\mathcal{M}$.) Thus, if $s<0$, the space $H^{s,\varphi,(0)}(\Omega)$ is the completion of $L_{2}(\Omega)$ with respect to the norm
$$
\|u\|_{s,\varphi,(0);\Omega}:=\sup\biggl\{\frac{|(u,w)_{\Omega}|}
{\;\quad\|w\|_{-s,1/\varphi;\Omega}}:
w\in H^{-s,1/\varphi}(\Omega),\,w\neq0\biggr\},
$$
where $(\cdot,\cdot)_{\Omega}$ stands for the inner product in $L_{2}(\Omega)$. Then  $\|u\|_{s,\varphi,(0);\Omega}=\|\mathcal{O}u\|_{s,\varphi;\mathbb{R}^{n}}$
for every $u\in C^{\infty}(\overline{\Omega})$, with $\mathcal{O}u:=u$ on $\overline{\Omega}$ and $\mathcal{O}u:=0$ on $\mathbb{R}^{n}\setminus\overline{\Omega}$. If $s\in\{1/2,3/2,\ldots,2q-1/2\}$, the Hilbert space $H^{s,\varphi,(2q)}(\Omega)$ is, by definition, the result of the quadratic interpolation with the parameter $1/2$ between the spaces $H^{s-1/2,\varphi,(2q)}(\Omega)$ and $H^{s+1/2,\varphi,(2q)}(\Omega)$.

If $s>2q-1/2$, the spaces $H^{s,\varphi,(2q)}(\Omega)$ and $H^{s,\varphi}(\Omega)$ are equal as completions of $C^{\infty}(\overline{\Omega})$ with respect to equivalent norms. In the opposite case, the space $H^{s,\varphi,(2q)}(\Omega)$ contains elements that are not distributions. If $s_1<s_2$ and $\varphi_1,\varphi_2\in\mathcal{M}$, the identity mapping on $C^{\infty}(\overline{\Omega})$ extends uniquely to a compact embedding of $H^{s_2,\varphi_2,(2q)}(\Omega)$ in $H^{s_1,\varphi_1,(2q)}(\Omega)$.

\begin{proof}[Proof of Theorem~$\ref{10th1}$]
We arbitrarily choose a function $\chi\in C^{\infty}(\overline{\Omega})$ subject to $\mathrm{supp}\,\chi\subset\Omega_0\cup\nobreak\Gamma_0$ and consider a function $\eta\in C^{\infty}(\overline{\Omega})$ such that $\mathrm{supp}\,\eta\subset\Omega_0\cup\Gamma_0$ and that $\eta=1$ in some neighbourhood $V$ of $\mathrm{supp}\,\chi$ in the topology of $\overline{\Omega}$. According to the hypotheses of the theorem, we have the inclusion $(u,v)\in\mathcal{D}^{l}_{A}(\Omega,\Gamma)$ for a sufficiently small integer $l<s$ and the inclusion $\eta(f,g)\in\mathcal{E}^{0,s,\varphi}(\Omega,\Gamma)$. We must deduce from them that $\chi(u,v)\in\mathcal{D}^{s,\varphi}(\Omega,\Gamma)$.

Since the operator \eqref{10f5} is Fredholm and since the set
$$
\mathcal{E}^{\infty}(\overline{\Omega},\Gamma):=
C^{\infty}(\overline{\Omega})\times(C^{\infty}(\Gamma))^{q+\varkappa}
$$
is dense in $\mathcal{E}^{0,s,\varphi}(\Omega,\Gamma)$, it follows from
\cite[Lemma~2.1]{HohbergKrein60} that $\eta(f,g)=\Lambda(u',v')+(f'',g'')$ for some $(u',v')\in\mathcal{D}_{A}^{s,\varphi}(\Omega,\Gamma)$ and $(f'',g'')\in\mathcal{E}^{\infty}(\overline{\Omega},\Gamma)$. Then
\begin{equation}\label{1proof-f9}
\Lambda(u-u',v-v')=(1-\eta)(f,g)+(f'',g'')=:(f''',g'''),
\end{equation}
with
\begin{equation}\label{1proof-f10}
\zeta(f''',g''')=
\zeta(f'',g'')\in\mathcal{E}^{\infty}(\overline{\Omega},\Gamma)=
\bigcap_{\sigma\in\mathbb{R}}\mathcal{E}_{\sigma,(0)}(\Omega,\Gamma)
\end{equation}
for every function $\zeta\in C^{\infty}(\overline{\Omega})$ subject to $\mathrm{supp}\,\zeta\subset V$. Here and below,
$$
\mathcal{E}_{\sigma,\varphi,(0)}(\Omega,\Gamma):=
H^{\sigma-2q,\varphi,(0)}(\Omega)\oplus
\bigoplus_{j=1}^{q+\varkappa}H^{\sigma-m_{j}-1/2,\varphi}(\Gamma)
$$
and
$$
\mathcal{D}^{\sigma,\varphi,(2q)}(\Omega,\Gamma):= H^{\sigma,\varphi,(2q)}(\Omega)\oplus
\bigoplus_{k=1}^{\varkappa}H^{\sigma+r_{k}-1/2,\varphi}(\Gamma)
$$
for every $\sigma\in\mathbb{R}$.

It follows from \cite[Section~4.4.2, isomorphism (4.196)]{MikhailetsMurach14} that the identity mapping on $C^{\infty}(\overline{\Omega})$ extends uniquely to a continuous embedding $H^{l}_{A}(\Omega)\hookrightarrow H^{l,(2q)}(\Omega)$. Hence, we may consider the distribution $u-u'\in H^{l}_{A}(\Omega)$ as an element of the Roitberg space $H^{l,(2q)}(\Omega)$. Thus,
\begin{equation}\label{1proof-f11}
(u-u',v-v')\in\mathcal{D}^{l,(2q)}(\Omega,\Gamma).
\end{equation}

According to the lifting property \cite[Theorem~2.4.3]{Roitberg99} (see also \cite[Theorem~4]{ChepurukhinaMurach15MFAT1} as to the spaces $H^{\sigma,\varphi,(2q)}(\Omega)$), it follows from \eqref{1proof-f9}--\eqref{1proof-f11} that
$$
(u^{\circ},v^{\circ}):=\chi(u-u',v-v')\in \bigcap_{\sigma\in\mathbb{R}}\mathcal{D}^{\sigma,(2q)}(\Omega,\Gamma)
\subset\mathcal{D}^{s,\varphi}(\Omega,\Gamma)
$$
because $\mathrm{supp}\,\chi\subset V$. Hence,
$$
\chi(u,v)=(u^{\circ},v^{\circ})+\chi(u',v')
\in\mathcal{D}^{s,\varphi}(\Omega,\Gamma)
$$
in view of the inclusion $(u',v')\in\mathcal{D}_{A}^{s,\varphi}(\Omega,\Gamma)$.
\end{proof}

\begin{proof}[Proof of Theorem~$\ref{10th2}$]
According to \cite[Theorem~1]{ChepurukhinaMurach15MFAT1}, the mapping \eqref{mapping} extends uniquely (by continuity) to a Fredholm bounded operator
\begin{equation}\label{proof2-f12}
\Lambda:\mathcal{D}^{\sigma,\varphi,(2q)}(\Omega,\Gamma)\to
\mathcal{E}_{\sigma,\varphi,(0)}(\Omega,\Gamma)
\quad\mbox{for every}\;\;\sigma\in\mathbb{R},
\end{equation}
the kernel and index of this operator being the same as those of the operator~\eqref{Fredholm-positive-scale}. Note that these operators coincide if $s=\sigma>2q-1/2$. Let $\|\cdot\|'_{\sigma,\varphi,(2q)}$ denote the norm in $\mathcal{D}^{\sigma,\varphi,(2q)}(\Omega,\Gamma)$, and let $\|\cdot\|''_{\sigma,\varphi,(0)}$ denote the norm in $\mathcal{E}_{\sigma,\varphi,(0)}(\Omega,\Gamma)$.

Assume that $0\leq l\in\mathbb{Z}$ and that a function $\zeta\in C^{\infty}(\overline{\Omega})$ satisfies the condition $\zeta=1$ in a neighbourhood of $\mathrm{supp}\,\chi$. Let us prove by induction in $l$ that
\begin{equation}\label{proof2-f13}
\|\chi(u,v)\|'_{s,\varphi,(2q)}\leq c_{0}\bigl(\|\zeta\Lambda(u,v)\|''_{s,\varphi,(0)}+
\|\zeta(u,v)\|'_{s-l,\varphi,(2q)}\bigr)
\end{equation}
for every $(u,v)\in\mathcal{D}^{\infty}(\overline{\Omega},\Gamma)$ with some number $c_{0}>0$ not depending on $(u,v)$.

If $l=0$, then \eqref{proof2-f13} follows from the evident fact that the operator of the multiplication by a function from $C^{\infty}(\overline{\Omega})$ is bounded on every space $H^{\sigma,\varphi,(2q)}(\Omega)$. Assume now that the inequality \eqref{proof2-f13} holds true for a certain integer $l=p\geq0$, and prove this inequality for $l=p+1$.

Consider a function $\zeta_{0}\in C^{\infty}(\overline{\Omega})$ such that  $\zeta_{0}=1$ in a neighbourhood of $\mathrm{supp}\,\chi$ and that $\zeta=1$ in a neighbourhood of $\mathrm{supp}\,\zeta_{0}$. By the inductive assumption,
\begin{equation}\label{proof2-f14}
\|\chi(u,v)\|'_{s,\varphi,(2q)}\leq c_{1}\bigl(\|\zeta_{0}\Lambda(u,v)\|''_{s,\varphi,(0)}+
\|\zeta_{0}(u,v)\|'_{s-p,\varphi,(2q)}\bigr).
\end{equation}
In the proof, $c_{1}$, $c_{2}$,... denote some positive numbers that do not depend on $(u,v)$. Since the bounded operator \eqref{proof2-f12}, where $\sigma=s-p$, is Fredholm, we conclude by Peetre's lemma \cite[Lemma~3]{Peetre61} that
\begin{equation}\label{proof2-f15}
\|\zeta_{0}(u,v)\|'_{s-p,\varphi,(2q)}\leq c_{2}\bigl(\|\Lambda(\zeta_{0}(u,v))\|''_{s-p,\varphi,(0)}+
\|\zeta_{0}(u,v)\|'_{s-p-1,\varphi,(2q)}\bigr).
\end{equation}
Interchanging the operator of the multiplication by $\zeta_{0}$ with the PDOs used in the problem \eqref{10f1}, \eqref{10f2}, we write
\begin{equation}\label{proof2-f16}
\begin{aligned}
\Lambda(\zeta_{0}(u,v))&=\Lambda(\zeta_{0}\zeta(u,v))=
\zeta_{0}\Lambda(\zeta(u,v))+\Lambda'(\zeta(u,v))\\
&=\zeta_{0}\Lambda(u,v)+\Lambda'(\zeta(u,v)),
\end{aligned}
\end{equation}
where $\Lambda'$ is an operator of the same structure as $\Lambda$ but formed by PDOs of lower orders than the corresponding PDOs in \eqref{10f1}, \eqref{10f2}. Hence,
\begin{equation}\label{proof2-f17}
\|\Lambda'(\zeta(u,v))\|''_{s-p,\varphi,(0)}\leq
c_{3}\|\zeta(u,v)\|'_{s-p-1,\varphi,(2q)}
\end{equation}
due to \cite[Theorem~4.13]{MikhailetsMurach14}.
According to \eqref{proof2-f15}--\eqref{proof2-f17}, we obtain the inequality
\begin{align*}
\|\zeta_{0}(u,v)\|'_{s-p,\varphi,(2q)}\leq c_{4}\bigl(\|\zeta_{0}\Lambda(u,v)\|''_{s-p,\varphi,(0)}+
\|\zeta(u,v)\|'_{s-p-1,\varphi,(2q)}\bigr).
\end{align*}
Substituting it in \eqref{proof2-f14}, we arrive at \eqref{proof2-f13} in the $l=p+1$ case. Thus, \eqref{proof2-f13} is proved for every integer $l\geq0$.

Choose a number $p>\lambda$ such that $s-p$ is a negative integer. It follows from the inequality \eqref{proof2-f13} for an integer $l>p$ that
\begin{equation}\label{proof2-f18}
\|\chi(u,v)\|'_{s,\varphi,(2q)}\leq c_{5}\bigl(\|\zeta\Lambda(u,v)\|''_{s,\varphi,(0)}+
\|\zeta(u,v)\|'_{s-p,(2q)}\bigr)
\end{equation}
if we take \eqref{10f3} into account. Let us deduce the required estimate \eqref{10f9} from \eqref{proof2-f18}. We continue to assume that $(u,v)\in \mathcal{D}^{\infty}(\overline{\Omega},\Gamma)$.

By the definition of $H^{s,\varphi,(2q)}(\Omega)$, we have
\begin{equation*}
\|\chi u\|_{s,\varphi;\Omega}\leq \|\chi u\|_{s,\varphi,(0);\Omega}\leq
\|\chi u\|_{s,\varphi,(2q);\Omega}
\end{equation*}
if $s\notin\{1/2,3/2,\ldots,2q-1/2\}$. It follows from this by the quadratic interpolation that
\begin{equation*}
\|\chi u\|_{s,\varphi;\Omega}\leq c_{6}\|\chi u\|_{s,\varphi,(2q);\Omega}
\end{equation*}
for the rest values of $s$. Hence,
\begin{equation}\label{proof2-f19}
\|\chi(u,v)\|'_{s,\varphi}\leq c_{7}\|\chi(u,v)\|'_{s,\varphi,(2q)}.
\end{equation}

Let $W$ be an open set from the topology on $\overline{\Omega}$ such that $\mathrm{supp}\,\chi\subset W$ and that $\eta=1$ on $\overline{W}$ and that $W_0:=W\cap\Omega$ is an open domain in $\mathbb{R}^{n}$ with infinitely smooth boundary. The last condition allows us to consider the
Roitberg space $H^{s-p,(2q)}(W_0)$. Let $w\in C^{\infty}(\overline{W})$ be the restriction of $u$ to $\overline{W}$. Assume in addition that  $\mathrm{supp}\,\zeta\subset W$. We have the equivalence of norms
\begin{equation}\label{proof2-f20}
\|\zeta u\|_{s-p,(2q),\Omega}\asymp\|\zeta w\|_{s-p,(2q);W_0}.
\end{equation}
Indeed, owing to \cite[Theorem 6.1.1]{Roitberg96} and since $s-p<0$, we get
\begin{align*}
\|\zeta u\|_{s-p,(2q);\Omega}&\asymp
\|\zeta u\|_{s-p,(0);\Omega}+\|A(\zeta u)\|_{s-p-2q,(0);\Omega}\\
&=\|\mathcal{O}(\zeta u)\|_{s-p;\mathbb{R}^{n}}+
\|\mathcal{O}A(\zeta u)\|_{s-p-2q;\mathbb{R}^{n}}\\
&=\|\zeta w\|_{s-p,(0);W_0}+\|A(\zeta w)\|_{s-p-2q,(0);W_0}\asymp
\|\zeta w\|_{s-p,(2q);W_0}
\end{align*}
because $\mathcal{O}(\zeta u)$ and $\mathcal{O}A(\zeta u)$ are also extensions of the functions $\zeta w$ and $A(\zeta w)$, resp., to $\mathbb{R}^{n}$ with zero. According to \cite[Section~4.4.2, isomorphism (4.196)]{MikhailetsMurach14} we have another equivalence of norms
\begin{equation}\label{proof2-f21}
\|w\|_{s-p,(2q),W_0}+\|Aw\|_{W_0}\asymp
\|w\|_{s-p,W_0}+\|Aw\|_{W_0};
\end{equation}
here, recall, $\|\cdot\|_{W_0}$ denotes the norm in $L_{2}(W_0)$.

Formulas \eqref{proof2-f20} and \eqref{proof2-f21} yield
\begin{equation}\label{proof2-f22}
\begin{aligned}
\|\zeta u\|_{s-p,(2q);\Omega}&\asymp\|\zeta w\|_{s-p,(2q);W_0}\leq
c_{8}\bigl(\|w\|_{s-p,(2q);W_0}+\|Aw\|_{W_0}\bigr)\\
&\asymp\|w\|_{s-p;W_0}+\|Aw\|_{W_0}\leq
\|\eta u\|_{s-p;\Omega}+\|\eta Au\|_{\Omega}.
\end{aligned}
\end{equation}
Substituting \eqref{proof2-f19} and \eqref{proof2-f22} in \eqref{proof2-f18}, we get
\begin{align*}
\|\chi(u,v)\|'_{s,\varphi}&\leq c_{9} \bigl(\|\zeta\Lambda(u,v)\|''_{s,\varphi,(0)}+
\|\eta(u,v)\|'_{s-p}+\|\eta Au\|_{\Omega}\bigr)\\
&\leq c\bigl(\|\eta\Lambda(u,v)\|''_{0,s,\varphi}+
\|\eta(u,v)\|'_{s-\lambda,\varphi}\bigr)
\end{align*}
because $s-2q<0$ (then the norm $\|\cdot\|_{s-2q,(0);\Omega}$ is subordinate to $\|\cdot\|_{\Omega}$) and because $p>\lambda$ (then
the norm $\|\cdot\|'_{s-p}$ is subordinate to $\|\cdot\|'_{s-\lambda,\varphi}$). Thus, we have proved the required estimate \eqref{10f9} in the case where $(u,v)\in \mathcal{D}^{\infty}(\overline{\Omega},\Gamma)$.

Now we consider an arbitrary vector \eqref{10f6} that satisfies the hypotheses of Theorem~$\ref{10th1}$ and deduce this estimate from the case just examined. Let $V$ be an open set from the topology on $\overline{\Omega}$ such that $\overline{V}\subset\Omega_{0}\cup\Gamma_{0}$ and $\mathrm{supp}\,\eta\subset V$ and that $V_0:=V\cap\Omega$ is an open domain in $\mathbb{R}^{n}$ with an infinitely smooth boundary $\partial V_0$. According to Theorem~$\ref{10th1}$, we have the inclusion $\omega:=u\!\upharpoonright\!V_0\in H^{s,\varphi}_{A}(V_0)$. Since  $C^{\infty}(\overline{V})$ is dense in $H^{s,\varphi}_{A}(V_0)$, there exists a sequence $(u_{r})_{r=1}^{\infty}\subset C^{\infty}(\overline{\Omega})$ such that $\omega_{r}:=u_{r}\!\upharpoonright\!\overline{V}\to\omega$ in $H^{s,\varphi}(V_0)$ and $A\omega_{r}\to A\omega$ in $L_{2}(V_0)$ as $r\to\infty$. Then
\begin{equation}\label{proof2-f23}
\eta u_{r}\to\eta u\quad\mbox{in}\;\;
H^{s,\varphi}(\Omega)
\end{equation}
and
\begin{equation}\label{proof2-f24}
\eta Au_{r}\to\eta Au\quad\mbox{in}\;\;L_{2}(\Omega)
\end{equation}
as $r\to\infty$. The second convergence is evident; let us explain the first. Since $\omega_{r}-\omega\to0$ in $H^{s,\varphi}(V_0)$, there exists a sequence $(\omega_{r}^{\circ})_{r=1}^{\infty}\subset H^{s,\varphi}(\mathbb{R}^{n})$ such that $\omega_{r}^{\circ}=\omega_{r}-\omega$ in $V_0$ and that  $\omega_{r}^{\circ}\to0$ in $H^{s,\varphi}(\mathbb{R}^{n})$. Then $\eta(u_{r}-u)=\eta\cdot \omega_{r}^{\circ}\!\upharpoonright\!\Omega\to0$ in $H^{s,\varphi}(\Omega)$, which gives~\eqref{proof2-f23}.

Let us deduce from the convergence $\omega_{r}\to\omega$ in $H^{s,\varphi}_{A}(V_0)$ that
\begin{equation}\label{proof2-f25}
\eta B_{j}u_{r}\to\eta B_{j}u\quad\mbox{in}\;\;
H^{s-m_j-1/2,\varphi}(\Gamma)
\end{equation}
as $r\to\infty$ for every $j\in\{1,\ldots,q+\varkappa\}$. Given such $j$, we consider a boundary PDO on $\partial V_0$ of the form
\begin{equation*}
B_{j}^{\star}:=B_{j}^{\star}(x,D):=
\sum_{|\mu|\leq m_j}b_{j,\mu}^{\star}(x)D^{\mu}
\end{equation*}
where each coefficient $b_{j,\mu}^{\star}$ belongs to $C^{\infty}(\partial V_0)$ and coincides with the corresponding coefficient $b_{j,\mu}$ of $B_{j}$ on $\Gamma\cap\partial V_0$. Then
\begin{equation*}
B_{j}^{\star}\omega_{r}\to B_{j}^{\star}\omega\quad\mbox{in}\quad H^{s-m_{j}-1/2,\varphi}(\partial V_0)
\end{equation*}
due to Proposition~\ref{10pr1} considered for $V_0$ instead of $\Omega$ (as is seen from \cite[Proof of Theorem~1]{MurachChepurukhina15UMJ5}, the boundedness of the operator \eqref{10f5} does not depend on property (ii) of boundary conditions given in Section~\ref{sec2}). Since $\eta B_{j}^{\star}\omega_{r}=\eta B_{j}u_{r}$ on $\Gamma\cap\partial V_0$, we get
\begin{equation}\label{proof2-f26}
\eta B_{j}u_{r}\to T(\eta B_{j}^{\star}\omega)\quad\mbox{in}\;\;
H^{s-m_j-1/2,\varphi}(\Gamma),
\end{equation}
where the distribution $T(\eta B_{j}^{\star}\omega)$ is equal by definition to $\eta B_{j}^{\star}\omega$ on $\Gamma\cap V$ and to zero on $\Gamma\setminus\mathrm{supp}\,\eta$.

Note that
\begin{equation}\label{proof2-f27}
\eta B_{j}^{\star}\omega=\eta B_{j}u\quad\mbox{on}\;\;\Gamma\cap V.
\end{equation}
Indeed, since $u\in H^{l}_{A}(\Omega)$ for some $l<s$, there exists a sequence $(u_{r}^{\ast})_{r=1}^{\infty}\subset C^{\infty}(\overline{\Omega})$ that converges to $u$ in $H^{l}_{A}(\Omega)$. Hence, $\eta B_{j}u_{r}^{\ast}\to\eta B_{j}u$ in $H^{l-m_j-1/2}(\Gamma)$ due to Proposition~\ref{10pr1}. Besides, since  $u_{r}^{\circ}:=u_{r}^{\ast}\!\upharpoonright\!V_0\to u\!\upharpoonright\!V_0=\omega$ in $H^{l}_{A}(V_0)$, we have the convergence $\eta B_{j}^{\star}u_{r}^{\circ}\to\eta B_{j}^{\star}\omega$ in $H^{l-m_j-1/2}(\partial V_0)$. However, $\eta B_{j}u_{r}^{\ast}=\eta B_{j}^{\star}u_{r}^{\circ}$ on $\Gamma\cap\partial V_0\supset\Gamma\cap V$. The last two limits therefore yield property \eqref{proof2-f27}. Owing to it, we have the equality $T(\eta B_{j}^{\star}\omega)=\eta B_{j}u$ on $\Gamma$, which together with \eqref{proof2-f26} gives \eqref{proof2-f25}.

Consider a function $\eta_{1}\in C^{\infty}(\Gamma)$ such that $\mathrm{supp}\,\eta_{1}\subset\Gamma_{0}$ and that $\eta_{1}=1$ in a neighbourhood of $\Gamma\cap\mathrm{supp}\,\eta$ (in the topology on $\Gamma$, of course). According to Theorem~$\ref{10th1}$, the inclusion $\eta_{1}v_{k}\in H^{s+r_k-1/2,\varphi}(\Gamma)$ holds true for each $k\in\{1,\ldots,\varkappa\}$. We choose a sequence $(v^{(r)}_{k})_{r=1}^{\infty}\subset C^{\infty}(\Gamma)$ such that
\begin{equation}\label{proof2-f28}
v^{(r)}_{k}\to\eta_{1}v_{k}\quad\mbox{in}\;\;H^{s+r_k-1/2,\varphi}(\Gamma)
\end{equation}
as $r\to\infty$. Then
\begin{equation}\label{proof2-f29}
\eta C_{j,k}v^{(r)}_{k}\to\eta C_{j,k}v_{k}\quad \mbox{in}\;\;H^{s-m_j-1/2,\varphi}(\Gamma)
\end{equation}
for all admissible values $j$ and $k$; see \cite[Lemma~2.5]{MikhailetsMurach14}. Put $v^{(r)}:=(v^{(r)}_{1},\ldots,v^{(r)}_{\varkappa})$. As we have proved, the inequality \eqref{10f9} holds true for $(u_{r},v^{(r)})\in\mathcal{D}(\overline{\Omega},\Gamma)$ instead of $(u,v)$, i.e.
\begin{equation*}
\|\chi(u_{r},v^{(r)})\|'_{s,\varphi}\leq c\,\bigl(\|\eta\Lambda(u_{r},v^{(r)})\|''_{0,s,\varphi}+
\|\eta(u_{r},v^{(r)})\|'_{s-\lambda,\varphi}\bigr).
\end{equation*}
Passing here to the limit as $r\to\infty$ and using \eqref{proof2-f23}--\eqref{proof2-f25}, \eqref{proof2-f28}, and \eqref{proof2-f29}, we obtain the required estimate \eqref{10f9}.
\end{proof}

\bigskip

\emph{Acknowledgement.} The authors have received funding from the European Union's Horizon 2020 research and innovation programme under the Marie Sk{\l}odowska-Curie grant agreement No 873071.

\medskip

{\small

\end{document}